\theoremstyle{plain}
\newtheorem{algorithm}{Algorithm}[section]
\newtheorem{thm}{Thm}
\newtheorem{lemma}[algorithm]{Lemma}
\newtheorem{theoremlet}[thm]{Theorem}
\newtheorem{lemmalet}[thm]{Lemma}
\newtheorem*{remarknonum}{Remark}
\newtheorem*{definitionnonum}{Definition}
\newtheorem{keylemma}[algorithm]{Key Lemma}
\numberwithin{equation}{algorithm}
\begin{document}
\title{Almost Nonnegative Curvature on Some Fake $\mathbb{R}P^{6}$s and $%
\mathbb{R}P^{14}$s}
\author{Priyanka Rajan}
\address{Department of Mathematics\\
University of California\\
Riverside, CA 92521}
\email{rajan@math.ucr.edu}
\urladdr{https://sites.google.com/site/priyankarrajangeometry/home}
\author{Frederick Wilhelm}
\thanks{This work was supported by a grant from the Simons Foundation
(\#358068, Frederick Wilhelm)}
\address{Department of Mathematics\\
University of California\\
Riverside, CA 92521}
\email{fred@math.ucr.edu}
\urladdr{https://sites.google.com/site/frederickhwilhelmjr/home}
\date{October 15, 2015}
\subjclass{53C20}
\keywords{lifting, almost nonnegative curvature, fake $\mathbb{R}P^{n}$}

\begin{abstract}
We apply the lifting theorem of Searle and the second author to put metrics
of almost nonnegative curvature on the fake $\mathbb{R}P^{6}$'s of Hirsch
and Milnor and on the analogous fake $\mathbb{R}P^{14}$'s.
\end{abstract}

\maketitle

One of the great unsolved problems of Riemannian geometry is to determine
the structure of collapse with a lower curvature bound. An apparently
simpler, but still intractable problem, is to determine which closed
manifolds collapse to a point with a lower curvature bound. Such manifolds
are called almost nonnegatively curved. Here we construct almost nonnegative
curvature on some fake $\mathbb{R}P^{6}$s and $\mathbb{R}P^{14}$s.

\begin{theoremlet}
\label{main thm}The Hirsch-Milnor fake $\mathbb{R}P^{6}$s and the analogous
fake $\mathbb{R}P^{14}$s admit Riemannian metrics that simultaneously have
almost nonnegative sectional curvature and positive Ricci curvature.
\end{theoremlet}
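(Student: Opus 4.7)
The plan is to realize each Hirsch-Milnor fake $\mathbb{R}P^{n}$ (for $n = 6, 14$) as the quotient of an explicitly constructed nonnegatively curved Riemannian metric on $S^{n}$ that is preserved by the Hirsch-Milnor involution $\sigma$, and then to apply the Searle-Wilhelm lifting theorem to a residual isometric action of a compact connected Lie group $G$ on the resulting quotient in order to obtain simultaneously almost nonnegative sectional curvature and strictly positive Ricci curvature.

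First I would replace the round metric on $S^{n}$ by a metric better adapted to the exotic involution. Since $\sigma$ is smoothly but not isometrically distinguishable from the antipodal map, the round metric is not $\sigma$-invariant. Instead one uses the bundle presentations of $S^{n}$ that underlie the Hirsch-Milnor construction (coming from the exotic $S^{3}$- and $S^{7}$-bundle data that enters the Milnor construction of exotic spheres over $S^{4}$ and $S^{8}$) and puts a connection-type submersion metric on $S^{n}$. This metric is nonnegatively curved by O'Neill, and since $\sigma$ preserves the bundle data it acts by isometries. Taking the quotient equips the fake $\mathbb{R}P^{n}$ with a metric of nonnegative sectional curvature.

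Next I would identify a compact connected Lie group $G$ acting isometrically on this metric, descended from residual structure-group symmetries of the bundle presentation, and chosen so that the orbit space has strictly positive Ricci curvature on its principal stratum. With such a $G$-action in hand, the Searle-Wilhelm lifting theorem produces a one-parameter family of $G$-invariant metrics on the fake $\mathbb{R}P^{n}$ whose Ricci curvatures are strictly positive while the sectional curvatures satisfy a normalized lower bound $\mathrm{sec} \cdot \mathrm{diam}^{2} \geq -\varepsilon$ with $\varepsilon \to 0$. This yields both of the curvature conditions asserted in Theorem~\ref{main thm}.

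The main obstacle I expect is the first step: producing a nonnegatively curved metric on $S^{n}$ that is invariant under the Hirsch-Milnor involution. Because $\sigma$ is defined by smooth topology rather than by any visible isometric structure, one must build the metric directly from the exotic bundle data and then verify the O'Neill computation by hand, with a separate model in each of the dimensions $n = 6$ and $n = 14$ governed by the different Hopf-type structures available there. Once this invariant nonnegatively curved metric and a sufficiently rich residual $G$-symmetry are in place, the remainder of the argument is a relatively standard application of the Searle-Wilhelm machinery.
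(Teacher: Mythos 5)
Your first step---building a nonnegatively curved metric on the Hirsch--Milnor sphere $S_k^{2b-2}$ that is invariant under the exotic involution $T$ (your $\sigma$)---is precisely the step the paper's method is designed to avoid, and it does not follow from the suggested O'Neill argument. The manifold $S_k^{2b-2}$ is a regular level set $f^{-1}(0)$ inside the Milnor bundle $\Sigma_k^{2b-1}$; it is not itself the total space of a fiber bundle over $\mathbb{S}^{b}$ (in the second chart the ``fiber'' over $v=0$ is the whole $\mathbb{S}^{b-1}$ rather than an $\mathbb{S}^{b-2}$, and in any case $\chi(\mathbb{S}^{2b-2})=2\neq 4=\chi(\mathbb{S}^{b})\,\chi(\mathbb{S}^{b-2})$ forbids such a fibration). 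So there is no Riemannian submersion to feed into O'Neill. More fundamentally, if a $T$-invariant nonnegatively curved metric on $S_k^{2b-2}$ existed, it would endow the fake $\mathbb{R}P^{2b-2}$ with genuine nonnegative sectional curvature, which is strictly stronger than Theorem~\ref{main thm} and is not known. Grove--Ziller does give nonnegatively curved metrics on the Milnor spheres $\Sigma_k^{7}$ themselves, but that does not produce a $T$-invariant one on the codimension-one hypersurface $S_k^{6}$.

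The second gap is in how you invoke the Searle--Wilhelm theorem. Theorem~\ref{lifting thm} is a comparison theorem: it takes \emph{two} $G$-manifolds $(M_e,G)$ and $(M_s,G)$ with equivalent orbit spaces and transfers almost nonnegative curvature (and, under the extra hypotheses, positive Ricci) from $M_s/G$ to $M_e$. It does not begin from an invariant nonnegatively curved metric on a single manifold, and applying it forces you to produce a comparison model $M_s$ and verify the orbit-space equivalence in the sense of the paper's definition. The paper takes $M_e=S_k^{2b-2}$ with the Davis $G^{\Lambda}$-action together with the Hirsch--Milnor involution $T$ (so $G=SO(3)\times\mathbb{Z}_2$ or $G_2\times\mathbb{Z}_2$), and takes $M_s$ to be the \emph{round} sphere $\mathbb{S}^{2b-2}$ with the linear actions (\ref{quat lin act}), (\ref{oct lin act}); then curvature $\geq 1$ on $M_s/(G^{\Lambda}\times\mathbb{Z}_2)$ is automatic from \cite{BGP}. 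The real work---omitted entirely from your sketch---is Lemmas \ref{quat lem} and \ref{oct lemma}, proved via the explicit invariant maps $Q_s$, $Q_k$ of Lemmas~\ref{Q_s Lemma} and \ref{Q_e lemma} and the $G^{\Lambda}$-equivariant embeddings $h_1,h_2$ of Key Lemma~\ref{key lemme}, which establish that the two orbit spaces are equivalent. With that done, the lifting theorem yields $G^{\Lambda}\times\mathbb{Z}_2$-invariant metrics on $S_k^{2b-2}$ that then descend to the fake projective spaces $P_k^{2b-2}=S_k^{2b-2}/T$ because $T$ belongs to the acting group. Your proposal, by contrast, names no candidate $M_s$, carries out no orbit-space identification, and rests on a Step 1 that would be a strictly stronger theorem than the one to be proved.
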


\begin{remarknonum}
By considering cohomogeneity one actions on Brieskorn varieties, Schwachh%
\"{o}fer and Tuschmann observed in \cite{SchTu1} that in each odd dimension
of the form, $4k+1,$ there are at least $4^{k}$ oriented diffeomorphism
types of homotopy $\mathbb{R}P^{4k+1}$s that admit metrics that
simultaneously have positive Ricci curvature and almost nonnegative
sectional curvature.
\end{remarknonum}

The Hirsch-Milnor fake $\mathbb{R}P^{6}$s are quotients of free involutions
on the images of embeddings $\iota $ of the standard $6$--sphere, $\mathbb{S}%
^{6},$ into some of the Milnor exotic $7$--spheres, $\Sigma _{k}^{7}$ (\cite%
{HircshM}, \cite{Mil}). Our proof begins with the observation that the $%
SO\left( 3\right) $--actions that Davis constructed on the $\Sigma _{k}^{7}$%
s in \cite{Dav} leave these Hirsch-Milnor $S^{6}$s invariant and commute
with the Hirsch-Milnor free involution. Next we compare the
Hirsch-Milnor/Davis $\left( SO\left( 3\right) \times \mathbb{Z}_{2}\right) $%
--action on $\iota \left( S^{6}\right) \subset \Sigma _{k}^{7}$ with a very
similar linear action of $\left( SO\left( 3\right) \times \mathbb{Z}%
_{2}\right) $ on $\mathbb{S}^{6}$ $\subset \mathbb{R}^{7}$ and apply the
following lifting result of Searle and the second author.

\begin{theoremlet}
\label{lifting thm}\emph{(See Proposition 8.1 and Theorems B and C in \cite%
{SearW})} Let $\left( M_{e},G\right) $ and $\left( M_{s},G\right) $ be
smooth, compact, $n$--dimensional $G$--manifolds with $G$ a compact Lie
group. Suppose that the orbit spaces $M_{e}/G$ and $M_{s}/G$ are equivalent,
and $M_{s}/G$ has almost nonnegative curvature. Then $M_{e}$ admits a $G$%
--invariant family of metrics that has almost nonnegative sectional
curvature. Moreover, if the principal orbits of $\left( M_{e},G\right) $
have finite fundamental group and the quotient of the principal orbits of $%
M_{s}$ has Ricci curvature $\geq 1$, then every metric in the almost
nonnegatively curved family on $M_{e}$ can be chosen to also have positive
Ricci curvature.
\end{theoremlet}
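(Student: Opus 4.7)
The plan is to build $G$-invariant metrics on $M_e$ from the given almost nonnegatively curved family on $M_s/G$, using the orbit space equivalence to transport data at the slice level and a Cheeger-type deformation to control the curvature of the resulting metric. I would carry this out in two stages: first the almost nonnegative sectional curvature claim, then the positive Ricci upgrade under the additional hypothesis on principal orbits.

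For the sectional curvature statement, fix an almost nonnegatively curved family $\{\bar g_i\}$ on $M_s/G$, with $\sec \bar g_i \geq -\epsilon_i$, $\mathrm{diam}(\bar g_i)\leq 1$, $\epsilon_i \searrow 0$; after Cheeger deforming on $M_s$, realize each $\bar g_i$ as the quotient of a $G$-invariant metric $g_i^s$ on $M_s$. Since the orbit space equivalence $M_e/G \cong M_s/G$ preserves orbit types and slice representations, every $G$-tubular neighborhood $G\times_H V$ around an orbit in $M_s$ has a canonical counterpart in $M_e$. I would transport $g_i^s$ slice by slice along these identifications and glue with a $G$-invariant partition of unity subordinate to a $G$-tube cover, obtaining a smooth $G$-invariant metric $g_i^e$ on $M_e$ whose quotient is $\bar g_i$ up to controlled error. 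Then apply a Cheeger deformation to $g_i^e$ with parameter $t_i\to 0$: orbits shrink, mixed-plane curvatures become small, and horizontal sectional curvatures approach those of $\bar g_i$; rescaling to unit diameter yields $\sec \geq -C\epsilon_i$ on $M_e$ for a uniform constant $C$.

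For the positive Ricci upgrade, the finite-$\pi_1$ assumption on the principal orbits $G/H$ lets one choose on $G/H$ a normal homogeneous (or Nash-type) metric with Ricci $\geq 1$, which can be installed in the vertical factor during the Cheeger deformation to force vertical Ricci curvatures to be uniformly positive; horizontal Ricci inherits positivity from the Ricci $\geq 1$ hypothesis on the quotient of principal orbits of $M_s$, and mixed Ricci contributions can be kept small by tuning the deformation parameter against the orbit scaling. I expect the main obstacle to be the slice-level lifting near singular orbits: $g_i^e$ must be smooth and $G$-invariant on all of $M_e$ and have quotient metric close enough to $\bar g_i$ for the Cheeger deformation to transmit the curvature bound faithfully. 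The equivalence of slice representations across $M_s$ and $M_e$, together with a tubular neighborhood stability argument, is precisely what makes the slice-by-slice construction well-defined and consistent across orbit-type strata.
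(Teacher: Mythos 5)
The paper does not prove this statement: it is quoted with the parenthetical citation ``(See Proposition 8.1 and Theorems B and C in \cite{SearW})'' embedded in the statement itself, and the full argument lives in Searle--Wilhelm. There is therefore no in-paper proof against which to check your sketch; what follows is an assessment relative to the cited source.

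Your outline captures the broad structure of the Searle--Wilhelm strategy---transport metrics across the orbit-space equivalence slice by slice, glue with a $G$-invariant partition of unity, and Cheeger-deform to make orbits small---but it defers exactly the steps where the theorem's content resides. First, after a partition-of-unity gluing you obtain a $G$-invariant metric $g_i^e$ on $M_e$, but there is no a priori control on its curvature or on the curvature of its Alexandrov quotient: O'Neill's formula $\mathrm{sec}(M/G)=\mathrm{sec}^H(M)+3|A|^2$ gives the quotient a larger curvature than the horizontal curvature of the total space, so a lower bound on $\mathrm{sec}(M_s/G,\bar g_i)$ transfers to $\mathrm{sec}^H(M_e,g_i^e)$ only after the $A$-tensor has been driven to zero by the Cheeger deformation, and the rate at which this happens---together with the vertical and mixed contributions---must be controlled uniformly over all strata, including near singular orbits where the fibers degenerate. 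Asserting a uniform constant $C$ with $\mathrm{sec}\geq -C\epsilon_i$ after ``tuning $t_i$'' is precisely the estimate one must prove, not a consequence of the setup. Second, for the Ricci upgrade, ``installing'' a Ricci $\geq 1$ metric on $G/H$ in the vertical factor is meaningful only over the principal stratum; along singular strata the orbit is a lower-dimensional $G/K$, the vertical--horizontal splitting is not a fixed fiber bundle, and combining the finite-$\pi_1$ hypothesis with the quotient-Ricci hypothesis so that the bound survives this degeneration is again where the argument in \cite{SearW} does its work. Your sketch names the right ingredients, but both the sectional and Ricci estimates near singular orbits are left as assertions rather than arguments.
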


We emphasize that to apply Theorem \ref{lifting thm}, $M_{s}/G$ need not be
a Riemannian manifold, but since $M_{s}$ is compact, $M_{s}/G$ is an
Alexandrov space with curvature bounded from below. The meaning of almost
nonnegative curvature for Alexandrov spaces is as follows.

\begin{definitionnonum}
We say that a sequence of Alexandrov spaces $\left\{ \left( X,\mathrm{dist}%
_{\alpha }\right) \right\} _{\alpha }$ is almost nonnegatively curved if and
only if there is a $D>0$ so that 
\begin{equation*}
\mathrm{sec}\left( X,g_{\alpha }\right) \geq -\frac{1}{\alpha }\text{ and }%
\mathrm{Diam}\left( X,g_{\alpha }\right) \leq D,
\end{equation*}%
or equivalently, after a rescaling, $X$ collapses to a point with a uniform
lower curvature bound.
\end{definitionnonum}

The following is the precise notion of equivalence of orbit spaces required
by the hypotheses of Theorem \ref{lifting thm}.

\begin{definitionnonum}
Suppose $G$ acts on $M_{e}$ and on $M_{s}$. We say that the orbit spaces $%
M_{e}/G$ and $M_{s}/G$ are equivalent if and only if there is a
strata-preserving homeomorphism $\Phi :M_{e}/G\longrightarrow M_{s}/G$ whose
restriction to each stratum is a diffeomorphism with the following property:

Let $\pi _{s}:M_{s}\longrightarrow M_{s}/G$ and $\pi
_{e}:M_{e}\longrightarrow M_{s}/G$ be the quotient maps. If $\mathcal{S}%
\subset M_{e}$ is a stratum, then for any $x_{e}\in \mathcal{S}$ and any $%
x_{s}\in \pi _{s}^{-1}\left( \Phi \left( \pi _{e}\left( x_{e}\right) \right)
\right) ,$ the action of $G_{x_{e}}$ on $\nu \left( \mathcal{S}\right)
_{x_{e}}$ is linearly equivalent to the action of $G_{x_{s}}$ on $\nu \left( 
\mathcal{S}\right) _{x_{s}}$. Here $G_{x}$ is the isotropy subgroup at $x$
and $\nu \left( \mathcal{S}\right) _{x}$ is the normal space to $\mathcal{S}$
at $x.$
\end{definitionnonum}

To construct the metrics on the fake $\mathbb{R}P^{6}$s of Theorem \ref{main
thm}, we apply Theorem \ref{lifting thm} with $G=\left( SO\left( 3\right)
\times \mathbb{Z}_{2}\right) .$ $M_{e}$ will be the Hirsch-Milnor embedded
image of $\mathbb{S}^{6}$ in $\Sigma _{k}^{7},$ and $M_{s}$ will be $\mathbb{%
S}^{6}$ with the following $\left( SO\left( 3\right) \times \mathbb{Z}%
_{2}\right) $--action: View $\mathbb{S}^{6}$ as the unit sphere in $\mathbb{%
H\oplus }\func{Im}\mathbb{H},$ where $\mathbb{H}$ stands for the
quaternions, and let $SO\left( 3\right) \times \mathbb{Z}_{2}$ act on $%
\mathbb{S}^{6}\subset \mathbb{H\oplus }\func{Im}\mathbb{H}$ via 
\begin{eqnarray}
SO\left( 3\right) \times \mathbb{Z}_{2}\times \mathbb{S}^{6}
&\longrightarrow &\mathbb{S}^{6}  \notag \\
\left( g,\pm ,\left( a,c\right) \right) &\mapsto &\pm \left( g\left(
a\right) ,g\left( c\right) \right) .  \label{quat lin act}
\end{eqnarray}%
Here the $SO\left( 3\right) $--action on the $\mathbb{H}$--factor is the
direct sum of the standard action of $SO\left( 3\right) $ on $\func{Im}%
\mathbb{H}$ with the trivial action on $\func{Re}\left( \mathbb{H}\right) .$

Since quotient maps of isometric group actions preserve lower curvature
bounds,\linebreak\ $\mathbb{S}^{6}/\left( SO\left( 3\right) \times \mathbb{Z}%
_{2}\right) $ has curvature $\geq 1$ (\cite{BGP})$.$ Thus to construct the
metrics on the fake $\mathbb{R}P^{6}$s of Theorem \ref{main thm}, it
suffices to combine Theorem \ref{lifting thm} with the following result.

\begin{lemmalet}
\label{quat lem}The orbit space of the Hirsch-Milnor/Davis action of $%
SO\left( 3\right) \times \mathbb{Z}_{2}$ on $\iota \left( \mathbb{S}%
^{6}\right) \subset \Sigma _{k}^{7}$ is equivalent to the orbit space of the
linear action (\ref{quat lin act}) on $\mathbb{S}^{6}.$
\end{lemmalet}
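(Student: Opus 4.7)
The plan is to put both group actions into a form where their orbit structures can be compared directly, exhibiting a common parameterization of the orbit space and then checking that the isotropy data along each stratum agrees. For the linear model (\ref{quat lin act}), write $a = a_0 + a_1$ with $a_0 \in \mathbb{R}$ and $a_1 \in \mathrm{Im}\,\mathbb{H}$; the $SO(3)$--action is trivial on $a_0$ and acts diagonally by the standard representation on the pair $(a_1,c) \in \mathrm{Im}\,\mathbb{H} \oplus \mathrm{Im}\,\mathbb{H}$. A complete set of $SO(3)$--invariants on $\mathbb{S}^6$ is therefore $\left(a_0,\,|a_1|^2,\,|c|^2,\,\langle a_1,c\rangle\right)$, subject to $a_0^2 + |a_1|^2 + |c|^2 = 1$ and the Cauchy--Schwarz inequality $\langle a_1,c\rangle^2 \leq |a_1|^2|c|^2$; the $\mathbb{Z}_2$--factor further identifies $(a_0,a_1,c)$ with $(-a_0,-a_1,-c)$. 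The resulting stratification has three orbit types: a single isolated singular point coming from $\{a_1 = c = 0\}$ (isotropy $SO(3)$, with the antipodal $\mathbb{Z}_2$ identifying the two preimages), a one-dimensional stratum where $a_1$ and $c$ are linearly dependent but not both zero (isotropy a circle subgroup of $SO(3)$), and the open principal stratum where $a_1,c$ are linearly independent (trivial $SO(3)$--isotropy).

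Next I would make the Davis $SO(3)$--action explicit enough to describe its restriction to $\iota(\mathbb{S}^6) \subset \Sigma_k^7$. Davis's construction realizes $\Sigma_k^7$ as an $S^3$--bundle over $S^4$ on whose total space $SO(3)$ acts through the structural group, and the Hirsch--Milnor $\iota(\mathbb{S}^6)$ is the equatorial sphere along which the exotic twist is performed. One enumerates the fixed-point sets of the circle subgroups and of the full $SO(3)$ inside Davis's model and pulls them back to $\iota(\mathbb{S}^6)$. The expected outcome is that the restricted action is again of cohomogeneity two with precisely the same pattern of isotropy types --- an isolated $SO(3)$--fixed orbit (a pair of points exchanged by the commuting Hirsch--Milnor involution), a one-dimensional circle--isotropy locus, and an open principal stratum --- and I would verify this orbit-type matching by direct inspection of Davis's formulas. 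Combining with the commuting free $\mathbb{Z}_2$ then yields the full $SO(3) \times \mathbb{Z}_2$ orbit structure on $\iota(\mathbb{S}^6)$.

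With the strata on both sides identified and parameterized by the same invariants, the map $\Phi: \iota(\mathbb{S}^6)/G \to \mathbb{S}^6/G$ is defined by matching points with equal invariants; it is automatically a strata-preserving homeomorphism and a diffeomorphism on each stratum, since on both sides the parameters come from smooth $G$--invariant functions on smooth $G$--manifolds. The final task is to check, at one point of each singular stratum $\mathcal{S}$, that the slice representation of $G_{x_e}$ on $\nu(\mathcal{S})_{x_e}$ is linearly equivalent to that of $G_{x_s}$ on $\nu(\mathcal{S})_{x_s}$. This reduces to comparing representations of $SO(2)$, $SO(3)$, and the $\mathbb{Z}_2$--factor on low-dimensional normal spaces, for which classification by dimensions and weights suffices and the answers can be read off each model. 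The main technical obstacle, and the step I expect to require the most care, is the middle one: extracting a sufficiently concrete description of the Davis action restricted to $\iota(\mathbb{S}^6)$ so that the singular strata and their slice representations are visible, since $\iota$ is defined only indirectly through Milnor's twisted gluing rather than by an explicit formula.
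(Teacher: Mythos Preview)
Your overall architecture---build a complete set of $SO(3)$--invariants on each side, read off the stratification, and then match strata and slice representations---is exactly the skeleton the paper follows.  But two things separate your outline from a proof.

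First, you treat the Davis action on $\iota(\mathbb{S}^6)$ as something that must be ``extracted'' and that is defined ``only indirectly through Milnor's twisted gluing.''  In fact it is completely explicit: $\iota(\mathbb{S}^6)=S_k^6=f^{-1}(0)$ is the set $\{(u,q)\in\mathbb{H}\times\mathbb{S}^3:\operatorname{Re} q=0\}$ in each chart, glued by the map $\Phi_k$ of~(\ref{gluing eqn}), and the Davis $SO(3)$--action is simply the diagonal automorphism action $g\cdot(u,q)=(g(u),g(q))$.  Your description of Davis acting ``through the structural group'' and of $\iota(\mathbb{S}^6)$ as ``the equatorial sphere along which the exotic twist is performed'' are both off; once you write down the correct model, the step you flagged as the main obstacle largely evaporates.

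Second, and more substantively, the paper avoids the case-by-case verification of orbit types and slice representations that you propose.  It writes down $G^\Lambda$--equivariant embeddings
\[
h_1(u,q)=\phi(u)\begin{pmatrix}uq\\q\end{pmatrix},\qquad h_2(v,r)=\phi(v)\begin{pmatrix}r\\\bar v r\end{pmatrix}
\]
of the two charts of $S_k^6$ into the linear model $\mathbb{S}^6\subset\mathbb{H}\oplus\operatorname{Im}\mathbb{H}$, and checks that the explicit quotient maps satisfy $Q_k=Q_s\circ h_i$ (Key Lemma~\ref{key lemme}).  Because $h_1,h_2$ are equivariant embeddings whose images cover $\mathbb{S}^6$, the identification of orbit spaces, the equality of isotropy groups, and the equivalence of slice representations all follow at once, with no separate representation-theoretic bookkeeping.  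Your route would work, but it replaces this single observation with several verifications that you have not carried out; the paper's trick is both shorter and what actually settles the slice-representation hypothesis in Theorem~\ref{lifting thm}.
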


Our metrics on fake $\mathbb{R}P^{14}$s are octonionic analogs of our
metrics on fake $\mathbb{R}P^{6}$s. The analogy begins with Shimada's
observation that Milnor's proof of the total spaces of certain $\mathbb{S}%
^{3}$--bundles over $\mathbb{S}^{4}$ being exotic spheres also applies to
certain $\mathbb{S}^{7}$--bundles over $\mathbb{S}^{8}$ (\cite{Shim}).
Davis's construction of the $SO\left( 3\right) $--actions on $\Sigma
_{k}^{7} $s is based on the fact that $SO\left( 3\right) $ is the group of
automorphisms of $\mathbb{H}.$ Exploiting the fact that $G_{2}$ is the group
automorphisms of the octonions, $\mathbb{O},$ Davis constructs analogous $%
G_{2}$ actions on Shimada's exotic $\Sigma _{k}^{15}$s. By applying a result
of Brumfiel (\cite{Bruf}), we will see that the Hirsch and Milnor
construction of fake $\mathbb{R}P^{6}$s as quotients of $\iota \left( 
\mathbb{S}^{6}\right) \subset \Sigma _{k}^{7}$ also works to construct fake $%
\mathbb{R}P^{14}$s as quotients of $\iota \left( \mathbb{S}^{14}\right)
\subset \Sigma _{k}^{15}.$ Thus to construct the fake $\mathbb{R}P^{14}$s of
Theorem \ref{main thm}, it suffices to show the following.

\begin{lemmalet}
\label{oct lemma}The orbit space of the Hirsch-Milnor/Davis action of $%
G_{2}\times \mathbb{Z}_{2}$ on $\iota \left( \mathbb{S}^{14}\right) \subset
\Sigma _{k}^{15}$ is equivalent to the orbit space of the following linear
action of $G_{2}\times \mathbb{Z}_{2}$ on $\mathbb{S}^{14}\subset \mathbb{O}%
\oplus \func{Im}\mathbb{O},$%
\begin{eqnarray}
G_{2}\times \mathbb{Z}_{2}\times \mathbb{S}^{14} &\longrightarrow &\mathbb{S}%
^{14}  \notag \\
\left( g,\pm ,\left( a,c\right) \right) &\mapsto &\pm \left( g\left(
a\right) ,g\left( c\right) \right) .  \label{oct lin act}
\end{eqnarray}
\end{lemmalet}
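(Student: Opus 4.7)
\bigskip

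\noindent\textbf{Proof proposal for Lemma \ref{oct lemma}.} The plan is to run the octonionic analog of the argument for Lemma \ref{quat lem}, exploiting the structural parallel between the pairs $(SO(3),\mathbb{H})$ and $(G_{2},\mathbb{O})$: in each case the group is the automorphism group of the normed division algebra, acts trivially on $\operatorname{Re}$ and as the standard $(n-1)$--dimensional representation on $\operatorname{Im}$. Thus the $G_{2}$--representation on $\mathbb{O}\oplus\operatorname{Im}\mathbb{O}$ restricts to exactly the correct slice representations along $\mathbb{S}^{14}$ for comparison.

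First I would record an explicit description of the Davis $G_{2}$--action on $\Sigma_{k}^{15}$ in the two Shimada disk-bundle charts, using that Shimada's $\mathbb{S}^{7}$--bundle over $\mathbb{S}^{8}$ is assembled from $\mathbb{O}$--valued clutching maps, and that $G_{2}$ acts on each chart by applying a fiberwise automorphism. Next I would invoke Brumfiel's theorem (\cite{Bruf}) exactly as Hirsch-Milnor did for $\Sigma_{k}^{7}$ to produce a smooth embedding $\iota:\mathbb{S}^{14}\hookrightarrow\Sigma_{k}^{15}$ and a free involution $\tau$ on $\iota(\mathbb{S}^{14})$; then I would verify that $\iota(\mathbb{S}^{14})$ is $G_{2}$--invariant and that the Davis action commutes with $\tau$. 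This last point follows from the fact that $\tau$ is constructed from the antipodal map on the base $\mathbb{S}^{8}$ together with the standard $\mathbb{O}$--multiplication on fibers, both of which $G_{2}$--equivariantly match the linear picture on $\mathbb{O}\oplus\operatorname{Im}\mathbb{O}$.

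The core of the proof is the construction of the strata-preserving homeomorphism $\Phi:\iota(\mathbb{S}^{14})/(G_{2}\times\mathbb{Z}_{2})\longrightarrow \mathbb{S}^{14}/(G_{2}\times\mathbb{Z}_{2})$. I would use the chart description of step one to exhibit, in each disk-bundle chart, a $G_{2}\times\mathbb{Z}_{2}$--equivariant diffeomorphism onto an open subset of $\mathbb{S}^{14}\subset\mathbb{O}\oplus\operatorname{Im}\mathbb{O}$, and then check that these local equivariant identifications descend to a well-defined homeomorphism of orbit spaces (agreeing on overlaps up to the $G_{2}\times\mathbb{Z}_{2}$--action). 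On the top stratum this identification is automatic from dimension count, since $G_{2}$ acts transitively on $\mathbb{S}^{6}\subset\operatorname{Im}\mathbb{O}$ with $14$--dimensional total space meeting the generic orbit in a $6$--dimensional sphere. The lower strata correspond to points with nonzero $G_{2}$--isotropy, namely copies of $SU(3)$ (along $\operatorname{Re}\mathbb{O}\oplus\operatorname{Im}\mathbb{O}$ pairs with one $\operatorname{Im}\mathbb{O}$ component nonzero), $SU(2)$ (along configurations spanning a quaternion subalgebra), and $G_{2}$ itself (the fixed $\mathbb{S}^{0}\cup\mathbb{S}^{0}$ in the $\operatorname{Re}\mathbb{O}\oplus\{0\}$ factor).

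The hard part, and the place where genuine work is required, will be verifying the linear-equivalence condition on normal slice representations at each stratum. Unlike the quaternionic case, the $G_{2}$--isotropy groups act on the relevant normal spaces via distinguished representations coming from the octonion multiplication (for instance, $SU(3)$ acts on $\operatorname{Im}\mathbb{O}/\mathbb{R}\cong\mathbb{C}^{3}$ by its standard representation), and I must check that the Davis/Hirsch-Milnor action realizes exactly these same slice representations. This reduces, via the explicit chart formulas of step one, to a direct comparison of Jacobian-type data on each disk bundle fiber; I expect the check to go through because both actions are ultimately built from the multiplication and automorphism structure of $\mathbb{O}$, but it is precisely here that the octonion non-associativity could in principle cause discrepancies, so the slice-by-slice verification is the step that demands care. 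Once the slice representations are matched, Theorem \ref{lifting thm} applied with $M_{e}=\iota(\mathbb{S}^{14})$ and $M_{s}=\mathbb{S}^{14}$ furnishes the desired family of metrics on the fake $\mathbb{R}P^{14}$s, completing Theorem \ref{main thm}.
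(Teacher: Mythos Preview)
Your overall architecture---build chart-level $G_{2}$--equivariant maps from $S_{k}^{14}$ to the linear $\mathbb{S}^{14}$ and check that they agree after passing to orbit spaces---is exactly what the paper does. But two features of your plan are off, and the ``hard part'' you isolate is not where the content lies.

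First, two factual slips. The Hirsch--Milnor involution $T$ is $(u,q)\mapsto(u,-q)$, i.e.\ the fiberwise antipodal map, not the base antipodal map on $\mathbb{S}^{8}$; your description of $\tau$ is wrong, and with it the claimed reason for $G_{2}$--equivariance. Also, Brumfiel's theorem is not used to produce the embedding $\iota$ or the involution---$\iota(\mathbb{S}^{14})=f^{-1}(0)$ is already given by the Morse function $f$---but only, together with Kervaire--Milnor and Wall, to show that $\Theta_{15}\cong bP_{16}\oplus\mathbb{Z}_{2}$, which is what makes the Hirsch--Milnor exoticness argument go through in dimension $14$.

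Second, and more importantly, the paper bypasses your anticipated slice-by-slice verification entirely. It writes down explicit orbit maps $Q_{s}:\mathbb{S}^{14}\to\mathbb{R}^{3}$ and $Q_{k}:S_{k}^{14}\to\mathbb{R}^{3}$ via $G_{2}$--invariant scalar quantities ($|a|$, $\operatorname{Re}a$, $\langle\operatorname{Im}a,\operatorname{Im}c\rangle$), and then produces the local equivariant maps you are seeking in closed form:
\[
h_{1}(u,q)=\phi(u)\begin{pmatrix}uq\\ q\end{pmatrix},\qquad
h_{2}(v,r)=\phi(v)\begin{pmatrix}r\\ \bar v r\end{pmatrix},
\]
with $\phi(u)=(1+|u|^{2})^{-1/2}$. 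A direct calculation gives $Q_{k}=Q_{s}\circ h_{i}$ on $U_{i}$, so the two orbit spaces literally coincide as subsets of $\mathbb{R}^{3}$. Because $h_{1},h_{2}$ are $G_{2}$--equivariant embeddings, the isotropy groups and their normal slice representations are carried over tautologically; there is nothing to check stratum by stratum, and non-associativity never enters. The $\mathbb{Z}_{2}$ part is then a one-line computation: both $A$ and $T$ induce $(x,y,z)\mapsto(x,-y,z)$ on the common image in $\mathbb{R}^{3}$. Your proposal would eventually arrive at the same place, but without the explicit $h_{i}$ you are left promising a delicate verification that the paper renders unnecessary.
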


In Section 1, we review the construction of the Hirsch-Milnor and Davis
actions and explain why the Hirsch-Milnor construction works in the
Octonionic case. In Section 2, we prove Lemmas \ref{quat lem} and \ref{oct
lemma} and hence Theorem \ref{main thm}, and in Section 3, we make some
concluding remarks. We refer the reader to page 185 of \cite{Baez} for a
description of how $G_{2}$ acts as automorphisms of the Octonions.

\begin{remarknonum}
Explicit formulas for exotic involutions on $\mathbb{S}^{6}$ and $\mathbb{S}%
^{14}$ are given in (\cite{ADPR}), where it is shown, on pages 13--17, that
the corresponding fake $\mathbb{R}P^{6}$ is diffeomorphic to the
Hirsch--Milnor $\mathbb{R}P^{6}$ that corresponds to $\Sigma _{3}^{7}$.
\end{remarknonum}

\noindent \textbf{Acknowledgement. }\emph{It is a pleasure to thank Reinhard
Schultz for stimulating conversations with the first author about
topological aspects of this paper. We are grateful to a referee for pointing
out the Schwachh\"{o}fer and Tuschmann result in \cite{SchTu1} about
homotopy }$RP^{4k+1}$\emph{s admitting metrics that simultaneously have
positive Ricci curvature and almost nonnegative sectional curvature.}

\section{How to Construct Exotic Real Projective Spaces}

In this section, we review Milnor spheres, the Hirsch-Milnor construction,
and the Davis actions. We then explain how the Hirsch-Milnor argument gives
fake $\mathbb{R}P^{14}$s.

To construct the Milnor spheres, we write $\Lambda $ for $\mathbb{H}$ or $%
\mathbb{O}$ and $b$ for the real dimension of $\Lambda .$ To get a $\mathbb{S%
}^{b-1}$--bundle over $\mathbb{S}^{b}$ with structure group $SO\left(
b\right) ,$ $\left( E_{h,j},p_{h,j}\right) ,$ we glue two copies of $\Lambda
\times \mathbb{S}^{b-1}$ together via 
\begin{eqnarray}
\Phi _{h,j} &:&\Lambda \setminus \left\{ 0\right\} \times \mathbb{S}%
^{b-1}\longrightarrow \Lambda \setminus \left\{ 0\right\} \times \mathbb{S}%
^{b-1}  \notag \\
\Phi _{h,j} &:&\left( u,q\right) \mapsto \left( \frac{u}{\left\vert
u\right\vert ^{2}},\left( \frac{u}{\left\vert u\right\vert }\right)
^{h}q\left( \frac{u}{\left\vert u\right\vert }\right) ^{j}\right) .
\label{gluing eqn}
\end{eqnarray}

To define the projection $p_{h,j}:$ $E_{h,j}\longrightarrow \mathbb{S}^{b},$
we think of $\mathbb{S}^{b}$ as obtained by gluing together two copies of $%
\Lambda $ along $\Lambda \setminus \left\{ 0\right\} $ via $u\mapsto \frac{u%
}{\left\vert u\right\vert ^{2}}.$ $p_{h,j}$ is then defined to be the
projection to either copy of $\Lambda .$

When $h+j=\pm 1$, the smooth function%
\begin{equation*}
f:(u,q)\mapsto \frac{\func{Re}(q)}{\sqrt{1+|u|^{2}}}=\frac{\func{Re}\left(
vr^{-1}\right) }{\sqrt{1+\left\vert v\right\vert ^{2}}}
\end{equation*}%
is regular except at $\left( u,q\right) =\left( 0,\pm 1\right) .$ Hence, $%
E_{h,j}$ is homeomorphic to $\mathbb{S}^{2b-1}$ if $h+j=\pm 1,$ and a
Mayer-Vietoris argument shows that $E_{h,j}$ is not homeomorphic to $\mathbb{%
S}^{2b-1}$ if $h+j\neq \pm 1.$ Since $f\left( 0,\pm 1\right) =\pm 1,$ it
also follows that $f^{-1}\left( 0\right) $ is diffeomorphic to $\mathbb{S}%
^{2b-2}.$

From now on we assume that 
\begin{equation}
h+j=1,  \label{sphere eqn}
\end{equation}%
and we set 
\begin{equation}
k=h-j.  \label{dfn of k}
\end{equation}%
So%
\begin{equation*}
k=2h-1.
\end{equation*}%
For simplicity, we will write $\Sigma _{k}^{2b-1}$ for $E_{h,j}$ and $\Phi
_{k}$ for $\Phi _{h,j},$ and set 
\begin{equation*}
S_{k}^{2b-2}\equiv f^{-1}\left( 0\right) .
\end{equation*}

The Hirsch-Milnor construction (\cite{HircshM}) begins with the observation
that the involution 
\begin{eqnarray*}
T &:&\Lambda \times \mathbb{S}^{b-1}\longrightarrow \Lambda \times \mathbb{S}%
^{b-1} \\
T &:&\left( u,q\right) \mapsto \left( u,-q\right)
\end{eqnarray*}%
induces a well-defined free involution of $\Sigma _{k}^{2b-1}.$ Moreover, $T$
leaves $S_{k}^{2b-2}$ invariant.\label{dfn of T page} Lemma 3 of \cite%
{HircshM} says that the quotient of any fixed point free involution on $%
\mathbb{S}^{n}$ is homotopy equivalent to $\mathbb{R}P^{n}.$ In particular,
all of our spaces 
\begin{equation*}
P_{k}^{2b-2}\equiv S_{k}^{2b-2}/T
\end{equation*}%
are homotopy equivalent to $\mathbb{R}P^{2b-2}.$ Hirsch and Milnor then show
that when $b=4,$ $P_{k}^{6}$ is not diffeomorphic to $\mathbb{R}P^{6},$
provided $\Sigma _{k}^{7}$ is an odd element of $\Theta _{7},$ the group of
oriented diffeomorphism classes of differential structures on $\mathbb{S}%
^{7}.$ According to pages 102 and 103 of \cite{Ell-Kup}, there are $16$
oriented diffeomorphism classes among the $\Sigma _{k}^{7}$s, and among
these, $8$ are odd elements of $\Theta _{7}.$

To understand how this works octonionically, we let $\Theta _{15}$ be the
group of oriented diffeomorphism classes of differential structures on $%
\mathbb{S}^{15},$ and we let $bP_{16}$ be the set of the elements of $\Theta
_{15}$ that bound parallelizable manifolds. According to \cite{KerMil}, $%
bP_{16}$ is a cyclic subgroup of $\Theta _{15}$ of order $8,128$ and index $%
2,$ and according to Theorem 1.3 in \cite{Bruf}, $\Theta _{15}$ is not
cyclic. Thus 
\begin{eqnarray*}
\Theta _{15} &\cong &bP_{16}\oplus \mathbb{Z}_{2} \\
&\cong &\mathbb{Z}_{8,128}\oplus \mathbb{Z}_{2}.
\end{eqnarray*}%
According to Wall (\cite{Wall}), a homotopy sphere bounds a parallelizable
manifold if and only if it bounds a $7$--connected manifold. In particular,
each of the $\Sigma _{k}^{15}$s is in $bP_{16}.$

According to pages 101---107 of \cite{Ell-Kup}, $\Sigma _{k}^{15}$
represents an odd element of $bP_{16}$ if and only if $\frac{h\left(
h-1\right) }{2}$ is odd, that is, $h$ is congruent to $2$ or $3$ mod $4.$

The Hirsch-Milnor argument, combined with the fact that $\Theta _{15}\cong
bP_{16}\oplus \mathbb{Z}_{2},$ implies $P_{k}^{14}$ is not diffeomorphic to $%
\mathbb{R}P^{14},$ if $\Sigma _{k}^{15}$ is an odd element of $bP_{16}.$

We let 
\begin{equation*}
G^{\Lambda }\equiv \left\{ 
\begin{array}{ll}
SO\left( 3\right) & \text{when }\Lambda =\mathbb{H} \\ 
G_{2} & \text{when }\Lambda =\mathbb{O}.%
\end{array}%
\right.
\end{equation*}%
Davis observed that since $G^{\Lambda }$ is the automorphism group of $%
\Lambda ,$ the diagonal action 
\begin{eqnarray}
G^{\Lambda }\times \Lambda \times \mathbb{S}^{b-1} &\longrightarrow &\Lambda
\times \mathbb{S}^{b-1}  \label{Davis} \\
g\left( u,v\right) &=&\left( g\left( u\right) ,g\left( v\right) \right) 
\notag
\end{eqnarray}%
induces a well-defined $G^{\Lambda }$--action on $\Sigma _{k}^{2b-1}$ (\cite%
{Dav}).

Next we observe that the Davis action leaves $S_{k}^{2b-2}=$ $f^{-1}\left(
0\right) $ invariant and commutes with $T,$ giving us the $SO\left( 3\right)
\times \mathbb{Z}_{2}$ actions of Lemma \ref{quat lem} and the $G_{2}\times 
\mathbb{Z}_{2}$ actions of Lemma \ref{oct lemma}.

\section{Identifying the Orbit Spaces}

In this section, we prove Lemmas \ref{quat lem} and \ref{oct lemma}
simultaneously and hence Theorem \ref{main thm}. In Lemma \ref{Q_s Lemma}
(below), we identify the quotient map for the standard $G^{\Lambda }$%
--action of $\mathbb{S}^{2b-2}.$ In Lemma \ref{Q_e lemma} (below), we
identify the quotient map for the Davis action on $S_{k}^{2b-2}.$ Then in
Key Lemma \ref{key lemme}, we show that the two $G^{\Lambda }$ quotients are
the same. It is then a simple matter to identify the two $G^{\Lambda }\times 
\mathbb{Z}_{2}$ quotient spaces with each other.

\begin{lemma}
\label{Q_s Lemma}Let $\mathbb{S}^{2b-2}$ be the unit sphere in $\Lambda
\oplus \func{Im}\left( \Lambda \right) ,$ and let $\left\langle
,\right\rangle $ be the real dot product.

The map 
\begin{eqnarray*}
Q_{s} &:&\mathbb{S}^{2b-2}\longrightarrow Q_{s}\left( \mathbb{S}%
^{2b-2}\right) \subsetneq \mathbb{R}^{3} \\
\left( 
\begin{array}{c}
a \\ 
c%
\end{array}%
\right) &\longmapsto &\left( \left\vert a\right\vert ,\text{ }\func{Re}\,a,%
\text{ }\left\langle \func{Im}\,a,\func{Im}\,c\right\rangle \right)
\end{eqnarray*}%
has the following properties.

\noindent 1. The fibers of $Q_{s}$ coincide with the orbits of the $%
G^{\Lambda }$ action 
\begin{eqnarray*}
G^{\Lambda }\times \mathbb{S}^{2b-2} &\longrightarrow &\mathbb{S}^{2b-2} \\
\left( g,\left( a,c\right) \right) &\mapsto &\left( g\left( a\right)
,g\left( c\right) \right) .
\end{eqnarray*}

\noindent 2. The image of $Q_{s}$ is $Q_{s}\left( \mathbb{S}^{2b-2}\right) =$
\ 
\begin{equation*}
\left\{ \left. \left( x,y,z\right) \text{ }\right\vert \text{ }x\in \left[
0,1\right] \text{ }y\in \left[ -x,x\right] ,\text{ }z\in \left[ -\sqrt{%
\left( x^{2}-y^{2}\right) \left( 1-x^{2}\right) },\sqrt{\left(
x^{2}-y^{2}\right) \left( 1-x^{2}\right) }\right] \right\} .
\end{equation*}%
\noindent 3. The principal orbits are mapped to the interior of $Q_{s}\left( 
\mathbb{S}^{2b-2}\right) .$\ The fixed points are mapped to $\left(
1,1,0\right) $\ and $\left( 1,-1,0\right) ,$\ and the other orbits are
mapped to $\partial Q_{s}\left( \mathbb{S}^{2b-2}\right) \setminus \left\{
\left( 1,1,0\right) ,\left( 1,-1,0\right) \right\} .$
\end{lemma}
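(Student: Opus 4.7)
The plan is to reduce everything to one algebraic fact: since $G^{\Lambda}$ acts on $\Lambda$ by algebra automorphisms, it fixes real parts, preserves norms, and hence preserves the real inner product on $\func{Im}\left(\Lambda\right)$. It follows immediately that the three coordinate functions of $Q_{s}$ are $G^{\Lambda}$--invariant, so each orbit is contained in a fiber of $Q_{s}$. For the reverse inclusion, suppose $Q_{s}\left(a,c\right)=Q_{s}\left(a^{\prime },c^{\prime }\right)$. The equations $\func{Re}\left(a\right)=\func{Re}\left(a^{\prime }\right)$ and $\left\vert a\right\vert =\left\vert a^{\prime }\right\vert $, together with the unit-sphere relation $\left\vert c\right\vert ^{2}=1-\left\vert a\right\vert ^{2}$, give $\left\vert \func{Im}\left(a\right)\right\vert =\left\vert \func{Im}\left(a^{\prime }\right)\right\vert $ and $\left\vert c\right\vert =\left\vert c^{\prime }\right\vert $; combined with the third coordinate this says that the pairs $\left(\func{Im}\left(a\right),c\right)$ and $\left(\func{Im}\left(a^{\prime }\right),c^{\prime }\right)$ in $\func{Im}\left(\Lambda \right)^{2}$ have the same Gram matrix. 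Thus Claim 1 reduces to showing that $G^{\Lambda }$ acts transitively on ordered pairs of vectors in $\func{Im}\left(\Lambda \right)$ with a prescribed Gram matrix; once such a $g$ is produced, it automatically sends $\func{Re}\left(a\right)$ to $\func{Re}\left(a^{\prime }\right)$ and hence $a$ to $a^{\prime }$.

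This transitivity is the one nontrivial input, and I expect the octonionic case to be the main obstacle. For $\Lambda =\mathbb{H}$ it is classical: $SO\left(3\right)$ acts on $\func{Im}\left(\mathbb{H}\right)=\mathbb{R}^{3}$ in the standard way, transitively on vectors of fixed norm with stabilizer the $SO\left(2\right)$ that rotates the perpendicular plane, and $SO\left(2\right)$ is transitive on circles in that plane. Peeling off first $\func{Im}\left(a\right)$ and then the perpendicular component of $c$ (or reducing to a single-vector statement if $\func{Im}\left(a\right)=0$) completes the argument. For $\Lambda =\mathbb{O}$, $G_{2}$ acts transitively on $\mathbb{S}^{6}\subset \func{Im}\left(\mathbb{O}\right)$ with isotropy $SU\left(3\right)$, and the $SU\left(3\right)$--representation on the six-dimensional perpendicular complement is the standard representation on $\mathbb{C}^{3}$, hence transitive on vectors of any prescribed real norm; the same two-step argument goes through. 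This gives Claim 1.

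For Claim 2, the easy inclusion is direct: on the unit sphere $\left\vert a\right\vert \in \left[0,1\right]$, $\left\vert \func{Re}\left(a\right)\right\vert \leq \left\vert a\right\vert $, and Cauchy--Schwarz gives
\begin{equation*}
\left\vert \left\langle \func{Im}\left(a\right),\func{Im}\left(c\right)\right\rangle \right\vert \leq \left\vert \func{Im}\left(a\right)\right\vert \left\vert c\right\vert =\sqrt{x^{2}-y^{2}}\sqrt{1-x^{2}}.
\end{equation*}
For the reverse inclusion, given $\left(x,y,z\right)$ in the candidate set I would pick any unit imaginary $i\in \func{Im}\left(\Lambda \right)$, set $a=y+\sqrt{x^{2}-y^{2}}\,i$, and take $c=\tfrac{z}{\sqrt{x^{2}-y^{2}}}\,i+c^{\perp }$ with $c^{\perp }\perp i$ of norm $\sqrt{\left(1-x^{2}\right)-z^{2}/\left(x^{2}-y^{2}\right)}$; the stated bound on $z$ makes this radicand nonnegative, and the degenerate case $\left\vert y\right\vert =x$ forces $z=0$ with $c$ any imaginary vector of norm $\sqrt{1-x^{2}}$. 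Finally, Claim 3 follows by stratifying by orbit type: at an interior point, $\func{Im}\left(a\right)$ and $c$ are nonzero and linearly independent, so their common $G^{\Lambda }$--stabilizer is as small as possible (trivial for $SO\left(3\right)$, $SU\left(2\right)$ for $G_{2}$), i.e.\ principal; boundary points correspond to $\func{Im}\left(a\right)$ and $c$ being linearly dependent or one of them vanishing, which strictly enlarges the stabilizer; and the corners $\left(1,\pm 1,0\right)$ arise precisely from $\left(a,c\right)=\left(\pm 1,0\right)$, the only points fixed by all of $G^{\Lambda }$.
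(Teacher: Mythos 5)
Your proof is correct and follows essentially the same route as the paper: invariance of the three scalar functions gives one containment, and the reverse containment comes from showing that equal $Q_s$--values force equal norms $\left\vert \func{Im}\,a\right\vert$, $\left\vert \func{Im}\,c\right\vert$ and equal inner products, whence a single element of $G^{\Lambda}$ moves one point to the other; Parts 2 and 3 are handled with the same Cauchy--Schwarz bounds and boundary characterization. The only difference is that you spell out details the paper leaves implicit, in particular the two-step transitivity argument via the isotropy chain $G_2\supset SU(3)$ (resp.\ $SO(3)\supset SO(2)$) and an explicit preimage construction for Part 2.
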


\begin{proof}
Part 2 follows from the observations that%
\begin{eqnarray*}
\left\vert a\right\vert &\in &\left[ 0,1\right] , \\
\func{Re}\,a &\in &\left[ -\left\vert a\right\vert ,\left\vert a\right\vert %
\right] , \\
\left\langle \func{Im}\,a,\func{Im}\,c\right\rangle &\in &\left[ -\left\vert 
\func{Im}\left( a\right) \right\vert \left\vert \func{Im}\left( c\right)
\right\vert ,\text{ }\left\vert \func{Im}\left( a\right) \right\vert
\left\vert \func{Im}\left( c\right) \right\vert \right] ,
\end{eqnarray*}%
and 
\begin{equation*}
\left\vert \func{Im}\left( a\right) \right\vert \left\vert \func{Im}\left(
c\right) \right\vert \in \left[ 0,\text{ }\sqrt{\left( \left\vert
a\right\vert ^{2}-\func{Re}\left( a\right) ^{2}\right) \left( 1-\left\vert
a\right\vert ^{2}\right) }\right] .
\end{equation*}

Since the three quantities $\left\vert a\right\vert ,$ $\func{Re}\,a,$ $%
\left\langle \func{Im}\,a,\func{Im}\,c\right\rangle $ are invariant under $%
G^{\Lambda },$ each orbit of $G^{\Lambda }$ is contained in a fiber of $%
Q_{s}.$

Conversely, if $\left( a_{1},c_{1}\right) $ and $\left( a_{2},c_{2}\right) $
satisfy $Q_{s}\left( a_{1},c_{1}\right) =Q_{s}\left( a_{2},c_{2}\right) ,$
then 
\begin{eqnarray*}
\left\vert a_{1}\right\vert &=&\left\vert a_{2}\right\vert \\
\func{Re}\left( a_{1}\right) &=&\func{Re}\left( a_{2}\right) ,\text{ and } \\
\left\langle \func{Im}\,a_{1},\func{Im}\,c_{1}\right\rangle &=&\left\langle 
\func{Im}\,a_{2},\func{Im}\,c_{2}\right\rangle .
\end{eqnarray*}%
Together with $\func{Re}\left( c_{i}\right) =0$ and $\left\vert
a_{i}\right\vert ^{2}+\left\vert c_{i}\right\vert ^{2}=1,$ this gives 
\begin{eqnarray*}
\left\vert \func{Im}\left( a_{1}\right) \right\vert &=&\left\vert \func{Im}%
\left( a_{2}\right) \right\vert \\
\left\vert \func{Im}\left( c_{1}\right) \right\vert &=&\left\vert \func{Im}%
\left( c_{2}\right) \right\vert .
\end{eqnarray*}%
Since we also have $\left\langle \func{Im}\,a_{1},\func{Im}%
\,c_{1}\right\rangle =\left\langle \func{Im}\,a_{2},\func{Im}%
\,c_{2}\right\rangle ,$ it follows that an element of $G^{\Lambda }$ carries 
$\left( a_{1},c_{1}\right) $ to $\left( a_{2},c_{2}\right) .$ This completes
the proof of Part 1.

To prove Part $3,$ we first note that the orbit of $\left( a,c\right) $ is
not principal if and only if 
\begin{equation*}
\left\vert \left\langle \func{Im}\,a,\func{Im}\,c\right\rangle \right\vert
=\left\vert \func{Im}\left( a\right) \right\vert \left\vert \func{Im}\left(
c\right) \right\vert ,
\end{equation*}%
and this is equivalent to $Q_{s}\left( a,c\right) \in \partial Q_{s}\left(
a,c\right) .$ So the principal orbits are mapped onto the interior of $%
Q_{s}\left( \mathbb{S}^{2b-2}\right) .$

On the other hand, the fixed points are $\left( \pm 1,0\right) $ and $%
Q_{s}\left( \pm 1,0\right) =\left( 1,\pm 1,0\right) $ as claimed.
\end{proof}

Before proceeding, recall that we view 
\begin{equation*}
\Sigma _{k}^{2b-1}=\left( \Lambda \times \mathbb{S}^{b-1}\right) \cup _{\Phi
_{k}}\left( \Lambda \times \mathbb{S}^{b-1}\right) ,
\end{equation*}%
where $\Phi _{k}$ is determined by Equations ($\ref{gluing eqn}$)$,$ (\ref%
{sphere eqn}), and (\ref{dfn of k}). Combining this with the definition of $%
S_{k}^{2b-2},$ we have that 
\begin{equation*}
S_{k}^{2b-2}=U_{1}\cup _{\Phi _{k}}U_{2},
\end{equation*}%
where 
\begin{eqnarray*}
U_{1} &\equiv &\left\{ \left. \left( u,q\right) \in \Lambda \times \mathbb{S}%
^{b-1}\text{ }\right\vert \text{ }\func{Re}\left( q\right) =0\right\} \text{
and} \\
U_{2} &\equiv &\left\{ \left. \left( v,r\right) \in \Lambda \times \mathbb{S}%
^{b-1}\text{ }\right\vert \text{ }\func{Re}\left( vr^{-1}\right) =\func{Re}%
\bar{v}r=0\right\} .
\end{eqnarray*}

The quotient map of the $G^{\Lambda }$--action on $S_{k}^{2b-2}$ has the
following description.

\begin{lemma}
\label{Q_e lemma}Let $\phi :R^{n}\longrightarrow R$\ be given by, $\phi (v)=%
\frac{1}{\sqrt{1+\left\vert v\right\vert ^{2}}}.$

The map%
\begin{eqnarray*}
Q_{k} &:&S_{k}^{2b-2}\longrightarrow Q_{k}\left( S_{k}^{2b-2}\right)
\subsetneq \mathbb{R}^{3} \\
Q_{k}|_{U_{1}}\left( u,q\right) &=&\phi (u)\left( \left\vert u\right\vert ,%
\text{ }\func{Re}\,uq,\text{ }\phi \left( u\right) \left\langle \func{Im}uq,%
\func{Im}q\right\rangle \right) \\
Q_{k}|_{U_{2}}\left( v,r\right) &=&\phi (v)\left( \left\vert r\right\vert ,%
\text{ }\func{Re}r,\text{ }\phi (v)\left\langle \func{Im}\,r,\func{Im}\,\bar{%
v}r\right\rangle \right)
\end{eqnarray*}

is well-defined and has fibers that coincide with the orbits of $G^{\Lambda
}.$
\end{lemma}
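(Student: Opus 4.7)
The plan is to factor $Q_{k}$ through the sphere map $Q_{s}$ of Lemma \ref{Q_s Lemma} by means of a natural auxiliary map into $\mathbb{S}^{2b-2}$, so that both well-definedness and the orbit/fiber correspondence can be read off from Lemma \ref{Q_s Lemma}.

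First I would introduce the partial maps
\[
\psi_{1}(u,q)=(\phi(u)\,uq,\ \phi(u)\,q)\ \text{ on }U_{1},\qquad \psi_{2}(v,r)=(\phi(v)\,r,\ \phi(v)\,\bar v r)\ \text{ on }U_{2},
\]
and check that each lands in the unit sphere of $\Lambda\oplus\mathrm{Im}(\Lambda)$: the norm computation reduces to $\phi(u)^{2}(1+|u|^{2})=1$ (and the analog on $U_{2}$), while the second components are imaginary because $\mathrm{Re}(q)=0$ on $U_{1}$ and $\mathrm{Re}(\bar v r)=0$ on $U_{2}$. A direct comparison then shows $Q_{k}=Q_{s}\circ\psi_{i}$ on each chart, so $Q_{k}$ measures, via $\psi_{i}$, exactly the three $G^{\Lambda}$-invariants isolated in Lemma \ref{Q_s Lemma}.

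Next I would verify well-definedness on the overlap by comparing the three coordinates of $Q_{k}|_{U_{1}}(u,q)$ and $Q_{k}|_{U_{2}}(\Phi_{k}(u,q))$, writing $(v,r)=(u/|u|^{2},(u/|u|)^{h}q(u/|u|)^{j})$ with $h+j=1$. The first coordinates match because $\phi(v)=|u|\phi(u)$ and $|r|=1$; the second match because $\mathrm{Re}(u^{h}qu^{j})=\mathrm{Re}(qu^{h+j})=\mathrm{Re}(uq)$ by the cyclic property of $\mathrm{Re}$ combined with power-associativity. After absorbing the powers of $\phi$ and $|u|$, the third coordinate identity reduces to $\langle u^{h}qu^{j},\,u^{h-1}qu^{j}\rangle=\langle uq,\,q\rangle$, which I would handle using the Hurwitz identity $\langle xy,zy\rangle=|y|^{2}\langle x,z\rangle$ (valid in both $\mathbb{H}$ and $\mathbb{O}$); after normalizing to $|u|=1$, both sides collapse to $\mathrm{Re}(u)$.

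For the fiber/orbit correspondence, I would use that every $g\in G^{\Lambda}$ is an algebra automorphism of $\Lambda$ and therefore preserves $|\cdot|$, $\mathrm{Re}$, multiplication, and the real inner product; consequently each coordinate of $Q_{k}$ is $G^{\Lambda}$-invariant, so orbits sit inside fibers and each $\psi_{i}$ is $G^{\Lambda}$-equivariant. For the converse, suppose $Q_{k}(p_{1})=Q_{k}(p_{2})$; applying $\Phi_{k}$ if necessary, I may assume both points lie in $U_{1}$. Then $Q_{s}(\psi_{1}(p_{1}))=Q_{s}(\psi_{1}(p_{2}))$, so Lemma \ref{Q_s Lemma} supplies $g\in G^{\Lambda}$ with $g\cdot\psi_{1}(p_{1})=\psi_{1}(p_{2})$; reading off the imaginary component gives $g(q_{1})=q_{2}$, and cancelling the unit $q_{2}$ from the first component gives $g(u_{1})=u_{2}$, so $g\cdot p_{1}=p_{2}$.

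The main obstacle is the third-coordinate identity in the overlap computation. In the octonionic case the natural reassociations such as $\bar u\,u^{h}=|u|^{2}u^{h-1}$ must be justified through power-associativity and the Hurwitz identity (and occasionally a Moufang identity) rather than plain associativity; keeping the orderings and the interplay between $\mathrm{Im}$ and $\mathrm{Re}$ correct, while repeatedly exploiting $h+j=1$, is where the real bookkeeping lies.
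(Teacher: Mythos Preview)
Your approach is correct and, in substance, coincides with the paper's; the difference is organizational. The paper proves this lemma by direct computation: it checks the overlap identity coordinate by coordinate (exactly your three checks, using $\phi(u/|u|^{2})=|u|\,\phi(u)$ and that conjugation by powers of $u$ preserves $\mathrm{Re}$ and the inner product), then shows $G^{\Lambda}$--invariance of each coordinate, and finally shows that equal $Q_k$--values force equal $|u|$, $\mathrm{Re}(u)$, and $\langle \mathrm{Im}\,u,\,q\rangle$, hence the same orbit. Only in the \emph{next} result (Key Lemma~\ref{key lemme}) does the paper introduce your maps $\psi_{1},\psi_{2}$ (called $h_{1},h_{2}$ there) and the factorization $Q_{k}=Q_{s}\circ h_{i}$. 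You have front-loaded that factorization and used it to import both invariance and the fiber/orbit correspondence from Lemma~\ref{Q_s Lemma}; this is a real economy and is exactly how the paper would have argued had it merged the two lemmas.

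Two small points. First, your reduction ``applying $\Phi_{k}$ if necessary, I may assume both points lie in $U_{1}$'' does not cover the points $(0,r)\in U_{2}$, which are not in the range of $\Phi_{k}$. The fix is immediate: the first coordinate of $Q_{k}$ equals $1$ precisely on that set and is $<1$ on $U_{1}$, so two points sharing a fiber are either both in $U_{1}$ or both of the form $(0,r_{i})$; in the latter case run the identical argument with $\psi_{2}$. The paper treats this case separately. Second, in the octonionic overlap computation the cleanest justification for all your reassociations (e.g.\ $\bar u\,(u^{h}qu^{j})=u^{h-1}qu^{j}$ when $|u|=1$) is Artin's theorem: every expression in sight lies in the subalgebra generated by $u$ and $q$, which is associative; invoking Moufang identities piecemeal also works but is more bookkeeping.
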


\begin{proof}
To see that $Q_{k}$ is well-defined, we will show 
\begin{equation}
Q_{k}|_{U_{1}\setminus \left\{ 0\times \mathbb{S}^{b-1}\right\}
}=Q_{k}|_{U_{2}\setminus \left\{ 0\times \mathbb{S}^{b-1}\right\} }\circ
\Phi _{k}|_{U_{1}\setminus \left\{ 0\times \mathbb{S}^{b-1}\right\} }.
\label{well dfned
eqn}
\end{equation}%
Since 
\begin{equation*}
\Phi _{k}\left( u,q\right) =\left( \frac{u}{\left\vert u\right\vert ^{2}}%
,\left( \frac{u}{\left\vert u\right\vert }\right) ^{h}q\left( \frac{u}{%
\left\vert u\right\vert }\right) ^{-\left( h-1\right) }\right) ,
\end{equation*}%
where $k=2h-1$, the left hand side of Equation (\ref{well dfned eqn}) is%
\begin{equation*}
Q_{k}|_{U_{2}\setminus \left\{ 0\times \mathbb{S}^{b-1}\right\} }\circ \Phi
_{k}|_{U_{1}\setminus \left\{ 0\times \mathbb{S}^{b-1}\right\} }\left(
u,q\right) =Q_{k}\left( \frac{u}{\left\vert u\right\vert ^{2}},\frac{%
u^{h}qu^{-(h-1)}}{|u|}\right)
\end{equation*}%
\begin{equation}
=\phi \left( \frac{u}{\left\vert u\right\vert ^{2}}\right) \left( \left\vert 
\frac{u^{h}qu^{-(h-1)}}{|u|}\right\vert ,\text{ }\func{Re}\,\frac{%
u^{h}qu^{-(h-1)}}{|u|},\phi \left( \frac{u}{\left\vert u\right\vert ^{2}}%
\right) \left\langle \func{Im}\,\frac{u^{h}qu^{-(h-1)}}{|u|},\func{Im}\,%
\frac{\bar{u}}{\left\vert u\right\vert ^{2}}\frac{u^{h}qu^{-(h-1)}}{|u|}%
\right\rangle \right)  \label{RHS eqn111}
\end{equation}

To see that this is equal to $Q_{k}|_{U_{1}\setminus \left\{ 0\times \mathbb{%
S}^{b-1}\right\} }\left( u,q\right) $, we will simplify each coordinate
separately. Before doing so we point out that 
\begin{eqnarray}
\frac{1}{\left\vert u\right\vert }\phi (\frac{u}{\left\vert u\right\vert ^{2}%
}) &=&\frac{1}{\left\vert u\right\vert }\frac{1}{\sqrt{1+\frac{1}{\left\vert
u\right\vert ^{2}}}}  \notag \\
&=&\frac{1}{\sqrt{\left\vert u\right\vert ^{2}+1}}  \label{Phi eqn} \\
&=&\phi \left( u\right) .  \notag
\end{eqnarray}%
So the first coordinate of the right hand side of Equation (\ref{RHS eqn111}%
) is 
\begin{eqnarray}
\phi (\frac{u}{\left\vert u\right\vert ^{2}})\left\vert \frac{%
u^{h}qu^{-(h-1)}}{|u|}\right\vert &=&\phi \left( \frac{u}{\left\vert
u\right\vert ^{2}}\right)  \notag \\
&=&\left\vert u\right\vert \phi (u),  \label{1st coord eqn}
\end{eqnarray}%
and the second coordinate of the right hand side of Equation (\ref{RHS
eqn111}) is%
\begin{eqnarray*}
\text{ }\phi \left( \frac{u}{\left\vert u\right\vert ^{2}}\right) \func{Re}\,%
\frac{u^{h}qu^{-(h-1)}}{|u|} &=&\text{ }\phi \left( \frac{u}{\left\vert
u\right\vert ^{2}}\right) \func{Re}\left( \frac{uq}{\left\vert u\right\vert }%
\right) \\
&=&\text{ }\frac{1}{\left\vert u\right\vert }\phi \left( \frac{u}{\left\vert
u\right\vert ^{2}}\right) \func{Re}\left( uq\right) \\
&=&\phi \left( u\right) \func{Re}\left( uq\right) ,\text{ by Equation (\ref%
{Phi eqn}).}
\end{eqnarray*}%
Finally, we have that the third coordinate of the right hand side of
Equation (\ref{RHS eqn111}) is 
\begin{eqnarray*}
&&\phi \left( \frac{u}{\left\vert u\right\vert ^{2}}\right) ^{2}\left\langle 
\func{Im}\,\frac{u^{h}qu^{-(h-1)}}{|u|},\func{Im}\,\frac{\bar{u}}{\left\vert
u\right\vert ^{2}}\frac{u^{h}qu^{-(h-1)}}{|u|}\right\rangle \\
&=&\phi \left( \frac{u}{\left\vert u\right\vert ^{2}}\right)
^{2}\left\langle \func{Im}\,\frac{u^{h}qu^{-(h-1)}}{|u|},\func{Im}\,\frac{%
u^{h-1}qu^{-(h-1)}}{|u|}\right\rangle \\
&=&\phi \left( \frac{u}{\left\vert u\right\vert ^{2}}\right) ^{2}\frac{1}{%
\left\vert u\right\vert ^{2}}\left\langle \func{Im}\,u^{h-1}\left( uq\right)
u^{-(h-1)},\func{Im}\,u^{h-1}\left( q\right) u^{-(h-1)}\right\rangle \\
&=&\phi \left( u\right) ^{2}\left\langle \func{Im}\,uq,\func{Im}%
\,q\right\rangle ,\text{ by Equation (\ref{Phi eqn}).}
\end{eqnarray*}

Combining the previous three displays with Equation (\ref{RHS eqn111}) and
the definition of $Q_{k}|_{U_{1}},$ we see that $Q_{k}:S_{k}^{2b-2}%
\longrightarrow Q_{k}\left( S_{k}^{2b-2}\right) \subsetneq \mathbb{R}^{3}$
is well-defined.

To see that $Q_{k}|_{U_{1}}$ is constant on each orbit of $G^{\Lambda },$ we
use the fact that $G^{\Lambda }$ acts by isometries and commutes with
conjugation to get 
\begin{eqnarray*}
\func{Re}g\left( u\right) g\left( q\right) &=&\left\langle g\left( u\right) ,%
\overline{g\left( q\right) }\right\rangle \\
&=&\left\langle g\left( u\right) ,g\left( \bar{q}\right) \right\rangle \\
&=&\left\langle u,\bar{q}\right\rangle \\
&=&\func{Re}\left( uq\right) .
\end{eqnarray*}%
We also have%
\begin{eqnarray*}
&&\left\langle \func{Im}\left( g\left( u\right) g\left( q\right) \right) ,%
\func{Im}g\left( q\right) \right\rangle \\
&=&\left\langle \func{Re}\left( g\left( u\right) \right) \func{Im}g\left(
q\right) +\func{Re}\left( g\left( q\right) \right) \func{Im}g\left( u\right)
+\func{Im}g\left( u\right) \func{Im}g\left( q\right) ,\text{ }\func{Im}%
g\left( q\right) \right\rangle \\
&=&\left\langle \func{Re}\left( u\right) \func{Im}g\left( q\right) +\func{Re}%
\left( q\right) \func{Im}g\left( u\right) ,\text{ }\func{Im}g\left( q\right)
\right\rangle \\
&=&\left\langle g\left( \func{Re}\left( u\right) \func{Im}\left( q\right) +%
\func{Re}\left( q\right) \func{Im}\left( u\right) \right) ,\text{ }g\left( 
\func{Im}\left( q\right) \right) \right\rangle \\
&=&\left\langle \func{Re}\left( u\right) \func{Im}\left( q\right) +\func{Re}%
\left( q\right) \func{Im}\left( u\right) ,\func{Im}\left( q\right)
\right\rangle \\
&=&\left\langle \func{Re}\left( u\right) \func{Im}\left( q\right) +\func{Re}%
\left( q\right) \func{Im}\left( u\right) +\func{Im}\,uq\func{Im}q,\text{ }%
\func{Im}\left( q\right) \right\rangle \\
&=&\left\langle \func{Im}\left( uq\right) ,\text{ }\func{Im}q\right\rangle .
\end{eqnarray*}

Since $\left\vert g\left( u\right) \right\vert =\left\vert u\right\vert $
and $\phi \left( gu\right) =\phi \left( u\right) ,$ it follows that 
\begin{equation*}
\left. Q_{k}\right\vert _{U_{1}}\left( 
\begin{array}{c}
g\left( u\right) \\ 
g\left( q\right)%
\end{array}%
\right) =\left. Q_{k}\right\vert _{U_{1}}\left( 
\begin{array}{c}
u \\ 
q%
\end{array}%
\right) .
\end{equation*}

Combining this with%
\begin{eqnarray*}
\left. Q_{k}\right\vert _{U_{2}}g\left( 
\begin{array}{c}
0 \\ 
r%
\end{array}%
\right) &=&\left( 1,\func{Re}\left( r\right) ,0\right) \\
&=&\left. Q_{k}\right\vert _{U_{2}}\left( 
\begin{array}{c}
0 \\ 
r%
\end{array}%
\right) ,
\end{eqnarray*}%
it follows that $Q_{k}$ is constant on each orbit of $G^{\Lambda }.$

On the other hand, if 
\begin{equation*}
Q_{k}|_{U_{1}}\left( u_{1},q_{1}\right) =Q_{k}|_{U_{1}}\left(
u_{2},q_{2}\right) ,
\end{equation*}%
then%
\begin{eqnarray}
\phi \left( u_{1}\right) \left\vert u_{1}\right\vert &=&\phi \left(
u_{2}\right) \left\vert u_{2}\right\vert ,  \label{first coord eqn} \\
\phi \left( u_{1}\right) ^{2}\left\langle \func{Im}\left( u_{1}q_{1}\right)
,q_{1}\right\rangle &=&\phi \left( u_{2}\right) ^{2}\left\langle \func{Im}%
\left( u_{2}q_{2}\right) ,q_{2}\right\rangle ,\text{ and\label{3rd comp eqn}}
\\
\phi \left( u_{1}\right) \func{Re}\,u_{1}q_{1} &=&\phi \left( u_{2}\right) 
\func{Re}\,u_{2}q_{2}.  \label{2nd coord eqn}
\end{eqnarray}

Equation (\ref{first coord eqn}) implies that $\left\vert u_{1}\right\vert
=\left\vert u_{2}\right\vert $ and $\phi \left( u_{1}\right) =\phi \left(
u_{2}\right) .$ So 
\begin{eqnarray*}
\func{Re}\left( u_{1}\right) &=&\func{Re}\left( u_{1}\right) \left\langle
q_{1},q_{1}\right\rangle \\
&=&\left\langle \left( \func{Re}\left( u_{1}\right) +\func{Im}\left(
u_{1}\right) \right) q_{1},q_{1}\right\rangle \text{, since }\func{Re}\left(
q_{1}\right) =0 \\
&=&\left\langle u_{1}q_{1},q_{1}\right\rangle \\
&=&\left\langle \func{Im}\left( u_{1}q_{1}\right) ,q_{1}\right\rangle ,\text{
since }\func{Re}\left( q_{1}\right) =0 \\
&=&\left\langle \func{Im}\left( u_{2}q_{2}\right) ,q_{2}\right\rangle ,\text{
by Equation (\ref{3rd comp eqn}) and the fact that }\phi \left( u_{1}\right)
=\phi \left( u_{2}\right) \\
&=&\func{Re}\left( u_{2}\right)
\end{eqnarray*}

and%
\begin{eqnarray*}
\left\langle \func{Im}\left( u_{1}\right) ,q_{1}\right\rangle
&=&-\left\langle u_{1},\bar{q}_{1}\right\rangle ,\text{ since }\func{Re}%
\left( q_{1}\right) =0 \\
&=&-\func{Re}\,u_{1}q_{1} \\
&=&-\func{Re}\,u_{2}q_{2},\text{ by Equation \ref{2nd coord eqn} and the
fact that }\phi \left( u_{1}\right) =\phi \left( u_{2}\right) \\
&=&-\left\langle u_{2},\bar{q}_{2}\right\rangle \\
&=&\left\langle \func{Im}\left( u_{2}\right) ,q_{2}\right\rangle .
\end{eqnarray*}%
Together with $\left\vert u_{1}\right\vert =\left\vert u_{2}\right\vert $
and the fact that $q_{1}$ and $q_{2}$ are imaginary, the previous two
displays imply that $\left( 
\begin{array}{c}
u_{1} \\ 
q_{1}%
\end{array}%
\right) $ and $\left( 
\begin{array}{c}
u_{2} \\ 
q_{2}%
\end{array}%
\right) $ are in the same orbit.

Finally suppose that 
\begin{equation*}
Q_{k}|_{U_{2}}\left( 0,r_{1}\right) =Q_{k}|_{U_{2}}\left( 0,r_{2}\right) .
\end{equation*}%
Then%
\begin{equation*}
\left( 1,\func{Re}\left( r_{1}\right) ,0\right) =\left( 1,\func{Re}\left(
r_{2}\right) ,0\right) .
\end{equation*}%
Since we also have that $\left\vert r_{1}\right\vert =\left\vert
r_{2}\right\vert =1,$ it follows that $\left( 0,r_{1}\right) $ and $\left(
0,r_{2}\right) $ are in the same $G^{\Lambda }$--orbit.
\end{proof}

\begin{keylemma}
\label{key lemme}Let $Q_{s}$ be as in Lemma \ref{Q_s Lemma}.

\noindent 1. There is a well-defined surjective map%
\begin{equation*}
\tilde{Q}_{k}:S_{k}^{2b-2}\rightarrow \mathbb{S}^{2b-2}/G^{\Lambda }
\end{equation*}

whose fibers coincide with the orbits of the $G^{\Lambda }$\ action on $%
S_{k}^{2b-2}.$

\noindent 2. The orbit types of $p\in S_{k}^{2b-2}$ and $Q_{s}^{-1}\left( 
\tilde{Q}_{k}\left( p\right) \right) $\ coincide.

\noindent 3. For $p\in S_{k}^{2b-2}$\ and any $q\in $\ $Q_{s}^{-1}\left( 
\tilde{Q}_{k}\left( p\right) \right) $\ the isotropy representation of $%
G_{p}^{\Lambda }$\ and $G_{q}^{\Lambda }$\ are equivalent.

In particular, $\mathbb{S}^{2b-2}/G^{\Lambda }$\ and $S_{k}^{2b-2}/G^{%
\Lambda }$\ are equivalent orbit spaces.
\end{keylemma}

\begin{proof}
Motivated by \cite{GromMey,Wilh1}, we let $h_{1},h_{2}:\Lambda \times 
\mathbb{S}^{b-2}\rightarrow \mathbb{S}^{2b-2}$ be given by 
\begin{eqnarray*}
h_{1}\left( u,q\right) &=&\left( 
\begin{array}{c}
uq \\ 
q%
\end{array}%
\right) \phi \left( u\right) \text{ and} \\
h_{2}\left( v,r\right) &=&\left( 
\begin{array}{c}
r \\ 
\bar{v}r%
\end{array}%
\right) \phi (v).
\end{eqnarray*}

We claim that $Q_{s}$ and $Q_{k}$ are related by%
\begin{equation}
Q_{k}=%
\begin{cases}
Q_{s}\circ h_{1} & \text{on }U_{1} \\ 
Q_{s}\circ h_{2} & \text{on }U_{2}%
\end{cases}%
.  \label{Q_e vs Q_s eqn}
\end{equation}%
Indeed, 
\begin{eqnarray}
Q_{s}\circ h_{1}\left( u,q\right) &=&Q_{s}\left( 
\begin{array}{c}
uq \\ 
q%
\end{array}%
\right) \phi \left( u\right)  \notag \\
&=&\phi (u)\left( |u|,\func{Re}\,uq,\text{ }\phi (u)\left\langle \func{Im}%
\,uq,\func{Im}\,q\right\rangle \right) \\
&=&Q_{k}\left( u,q\right)  \notag
\end{eqnarray}%
and 
\begin{eqnarray*}
Q_{s}\circ h_{2}\left( v,r\right) &=&Q_{s}\left( 
\begin{array}{c}
r \\ 
\bar{v}r%
\end{array}%
\right) \phi (v) \\
&=&\phi (v)\left( \left\vert r\right\vert ,\func{Re}\left( r\right) ,\phi
(v)\left\langle \func{Im}\,r,\func{Im}\bar{v}r\right\rangle \right) \\
&=&Q_{k}\left( v,r\right) ,
\end{eqnarray*}%
proving Equation (\ref{Q_e vs Q_s eqn}).

Since $h_{1}\left( \Lambda \times \mathbb{S}^{b-2}\right) \cup h_{2}\left(
\Lambda \times \mathbb{S}^{b-2}\right) =\mathbb{S}^{2b-2},$ Equation (\ref%
{Q_e vs Q_s eqn}) implies that $Q_{k}\left( S_{k}^{2b-2}\right) =Q_{s}\left( 
\mathbb{S}^{2b-2}\right) ;$ so setting $\tilde{Q}_{k}=Q_{k}$ gives a
well-defined surjective map%
\begin{equation*}
\tilde{Q}_{k}:S_{k}^{2b-2}\rightarrow \mathbb{S}^{2b-2}/G^{\Lambda },
\end{equation*}%
and Part 1 is proven. Parts 2 and 3 follow from the observation that $h_{1}$
and $h_{2}$ are $G^{\Lambda }$--equivariant embeddings.
\end{proof}

Since the antipodal map $A:\mathbb{S}^{2b-2}\longrightarrow \mathbb{S}%
^{2b-2} $ and the involution 
\begin{equation*}
T:S_{k}^{2b-2}\longrightarrow S_{k}^{2b-2}
\end{equation*}%
from page \pageref{dfn of T page}, commute with the $G^{\Lambda }$--actions (%
\ref{quat lin act}), (\ref{oct lin act}) and (\ref{Davis}), they induce
well-defined $\mathbb{Z}_{2}$--actions on our orbit space $Q_{s}\left( 
\mathbb{S}^{2b-2}\right) =Q_{e}\left( S_{e}^{2b-2}\right) =$ \ 
\begin{equation*}
\left\{ \left. \left( x,y,z\right) \text{ }\right\vert \text{ }x\in \left[
0,1\right] ,\text{ }y\in \left[ -x,x\right] ,\text{ }z\in \left[ -\sqrt{%
\left( x^{2}-y^{2}\right) \left( 1-x^{2}\right) },\sqrt{\left(
x^{2}-y^{2}\right) \left( 1-x^{2}\right) }\right] \right\} .
\end{equation*}%
A simple calculation shows that the two $\mathbb{Z}_{2}$--actions on $%
Q_{s}\left( \mathbb{S}^{2b-2}\right) $ coincide and are given by%
\begin{equation*}
\left( x,y,z\right) \mapsto \left( x,-y,z\right) .
\end{equation*}%
Since quotient maps of isometric group actions preserve lower curvature
bounds , $\mathbb{S}^{2b-2}/\left( SO\left( 3\right) \times \mathbb{Z}%
_{2}\right) $ has curvature $\geq 1$ (\cite{BGP}). Therefore, Theorem \ref%
{main thm} follows from Theorem \ref{lifting thm} and Key Lemma \ref{key
lemme}.

\section{Some Closing Remarks}

In the same paper, Hirsch and Milnor also constructed exotic $\mathbb{R}%
P^{5} $s, $P_{k}^{5}$s$.$ The Davis action also descends to the $P_{k}^{5}$s
where they commute with an $SO\left( 2\right) $--action. The combined $%
SO\left( 2\right) \times SO\left( 3\right) $--action on the $P_{k}^{5}$s is
by cohomogeneity one$.$ Dearricott and Grove--Ziller observed that since
these cohomogeneity one actions have codimension 2 singular orbits, Theorem
E of \cite{GrovZil1} implies that they admit invariant metrics of
nonnegative curvature.

Octonionically, the Hirsch-Milnor construction yields closed $13$%
--manifolds, $P_{k}^{13},$ that are homotopy equivalent to $\mathbb{R}%
P^{13}. $ Their proof that the $P_{k}^{5}$s are not diffeomorphic to $%
\mathbb{R}P^{5} $ breaks down, since in contrast to dimension $6,$ there is
an exotic $14$--sphere; however, Chenxu He has informed us that some of the $%
P_{k}^{13}$s are in fact exotic (\cite{He2}).

The Davis construction yields a cohomogeneity one action of $SO\left(
2\right) \times G_{2}$ on the $P_{k}^{13}$s, only now one of the singular
orbits has codimension 6. So we cannot apply Theorem E of \cite{GrovZil1}.
Moreover, there are cohomogeneity one manifolds that do not admit invariant
metrics with nonnegative curvature (\cite{GVWZ,He}). On the other hand, by
the main theorem of \cite{SchTu2}, every cohomogeneity one manifold admits
an invariant metric with almost nonnegative curvature.

\end{document}